\documentclass{svproc}
\usepackage{type1cm}        
\usepackage{graphicx}        

\usepackage[bottom]{footmisc}

\usepackage{newtxmath}       

\usepackage{url}

\usepackage{xcolor}
\usepackage{multirow}
\usepackage{booktabs}
\usepackage{subcaption}
\newcommand{\DG}[1]{{\color{blue} #1}}

\begin{document}
	\mainmatter              
	\title{The $W$-footrule coefficient: A copula-based measure of countermonotonicity}
	\titlerunning{The $W$-footrule coefficient}  
	%
	\author{Enrique de Amo\inst{1} \and David García-Fernandez\inst{2}\and
		Manuel Úbeda-Flores\inst{1,2}}
	\authorrunning{Enrique de Amo et al.} 
	%
	%
	\institute{Department of Mathematics, University of Almería,\\
    04120 Almería, Spain\\
		\email{edeamo@ual.es, mubeda@ual.es},\\
		\and
		Research Group of Theory of Copulas and Applications, University of Almería,\\
        04120 Almería, Spain\\
        \email{dgf992@inlumine.ual.es}}
        
	\maketitle              
	
	\begin{abstract}
We introduce the $W$-footrule coefficient $\Phi_C$, a copula-based coefficient of negative association defined as the $L^1$-distance to the countermonotonic copula $W$. We prove that Gini's gamma admits the decomposition $\gamma_C = \frac{2}{3}(\varphi_C+\Phi_C)$, linking it to Spearman's footrule $\varphi_C$. A rank-based estimator is introduced, with its strong consistency and asymptotic normality established via the functional delta method. Monte Carlo simulations confirm the estimator's finite-sample validity and its sensitivity to negative dependence structures.
\end{abstract}
	
	\section{Introduction}
	
Copula-based methods provide a natural framework for modeling and measuring dependence independently of marginal distributions. Classical rank-based measures such as Spearman's $\rho$, Kendall's $\tau$, and Spearman's footrule can all be expressed as functionals of the underlying copula \cite{Durante2016,Ne06}. These coefficients are mainly designed to quantify monotone increasing association, provide a global measure and depend only on the underlying copula. While Kendall's $\tau$ and Spearman's $\rho$ already detect negative association, they are symmetric in the sense that positive and negative dependence are treated on the same scale. In contrast, Spearman's footrule admits a geometric interpretation as an $L^1$-distance to the diagonal, emphasizing deviation from perfect positive dependence \cite{Ne98}. This motivates the construction of a complementary coefficient to measure the deviation from perfect negative dependence, i.e., the distance to the anti-diagonal $u+v=1$. In this paper, we propose such a measure, called the \emph{$W$-footrule coefficient} and denoted by $\Phi_C$. This coefficient is defined through the distribution of $C(U, 1-U)$, where the pair of random variables $(U, V)$ has copula $C$. Here, $W$ represents the countermonotonic copula, which characterizes perfect negative dependence.

The rest of the paper is organized as follows. Section \ref{sec:prel} recalls the necessary background on copulas and concordance measures. Section \ref{sec:foot} introduces $\Phi_C$ and establishes its main properties. Section \ref{sec:sample} develops the sample version and its asymptotic theory. Section \ref{sec:simulation} presents the simulation study. Finally, Section \ref{sec:conc} is devoted to conclusions.

\section{Preliminaries}\label{sec:prel}

A copula $C$ is a bivariate distribution function with uniform marginals on $[0,1]$. According to Sklar's theorem \cite{Sk59}, for any bivariate random vector $(X,Y)$ with joint distribution function $H$ and marginals $F$ and $G$, there exists a copula $C$ (which is uniquely determined on ${\rm Range}\,
F\,\times{\rm Range}\,G\,$) such that
\[
H(x,y) = C(F(x),G(y)),\quad\textrm{for all } (x,y)\in[-\infty,+\infty]^2;
\]
moreover, if the marginals are continuous, then the copula is unique. This copula captures the entire dependence structure between $X$ and $Y$, independently of the marginal effects \cite{Durante2016,Ne06}.

In this framework, three copulas serve as fundamental benchmarks. The independence copula is defined as $\Pi(u,v)=uv$ for all $(u,v)\in[0,1]^2$, while the Fr\'echet-Hoeffding bounds are given by $W(u,v)=\max\{u+v-1,0\}$ and $M(u,v)=\min\{u,v\}$ for all $(u,v)\in[0,1]^2$, representing perfect negative and perfect positive dependence, respectively. It is well known that every copula $C$ is bounded by these two extremes, satisfying $W(u,v) \leq C(u,v) \leq M(u,v)$ for all $(u,v) \in [0,1]^2$.

We recall that a \emph{measure of concordance} is a functional (on the set of copulas) satisfying a set of axioms \cite{Scarsini1984}. A classical example includes Gini's $\gamma$, which in terms of a copula $C$ is given by
$$\gamma_C = 4 \int_0^1 [C(u,u)+C(u,1-u)]\,{\rm d}u - 2,$$
satisfying $\gamma_W=-1$, $\gamma_\Pi=0$ and $\gamma_M=1$ (see \cite{Ne98}).

Spearman's footrule coefficient ---originally introduced by the psychologist Charles Spearman in \cite{Spearman1906}, and formalised its use as a measure of disarray in \cite{Diaconis1977}--- is defined, in terms of copulas, as
\begin{equation}\label{eq:footrule}\varphi_C = 1-3\int_0^1\!\!\!\int_0^1|u-v|\,{\rm d}C(u,v),
\end{equation}
or, equivalently,
\begin{equation*}\label{eq:footrule1}\varphi_C = 6\int_0^1 C(u,u)\,{\rm d}u-2
\end{equation*}
(see \cite{Ne98,Ne06}). Spearman’s footrule depends only on the $L_1$-distance of the copula to the upper bound $M$ and is a {\it weak measure of concordance} (see \cite{Lieb2014}).

\section{The $W$-footrule coefficient}\label{sec:foot}

Motivated by the $L^1$ representation of Spearman’s footrule ---recall Expression \eqref{eq:footrule}---, we introduce an analogue designed to measure deviation from perfect negative
dependence. Rather than quantifying an affine transform of the $L^1$-distance to the main diagonal $u=v$,  our construction, which we denote by $\Phi_C$, measures the deviation from the anti-diagonal $u+v=1$, i.e.,
$$\Phi_C=a\int_0^1\!\!\!\int_0^1 |1-u-v|\,{\rm d}C(u,v)+b,$$
where $a$ and $b$ are two real constants. We  normalise the coefficient such that $\Phi_W=-1$ and $\Phi_M=1/2$ ---contrasting with the symmetric values $\varphi_W=-1/2$ and $\varphi_M=1$---. Under $W$, we have $V=1-U$ almost surely. Hence
\( |1-U-V| = |1-U-(1-U)| = 0,\) and therefore
\[
\int_0^1\!\!\! \int_0^1 |1-u-v|\, {\rm d}W(u,v)=0.
\]
Thus
\(\Phi_W = b\). Under $M$, we have $V=U$ almost surely, so \(|1-U-V| = |1-2U|\). Hence
\[
\int_0^1\!\!\! \int_0^1 |1-u-v|\, {\rm d}M(u,v)
= \int_0^1 |1-2u|\, {\rm d}u.
\]
Now,
\[ \int_0^1 |1-2u|\,{\rm d}u=\int_0^{1/2} (1-2u)\,{\rm d}u+\int_{1/2}^1 (2u-1)\,{\rm d}u=\frac12.\]
whence \(\Phi_M = \frac{a}{2}+b\). This leads to $a=3$ and $b=-1$, and the following $L^1$-representation:
\begin{equation}\label{eq:L1repr}
\Phi_C = 3 \int_0^1\!\!\!\int_0^1 |1-u-v| \, {\rm d}C(u,v) - 1.
\end{equation}

We provide an alternative representation. Since $W(u,v) = (u+v-1)^{+}$ for all $(u,v)\in[0,1]^{2}$ ---where $x^+=\max(x,0)$---, we have
\begin{align}\label{eq:W-as-integral}
\int_0^1\!\!\!\int_0^1 (u+v-1)^{+}\,\mathrm{d}C(u,v)
&= \int_0^1\!\!\!\int_0^1 W(u,v)\,\mathrm{d}C(u,v).
\end{align}
For any two copulas
$C_1$ and $C_2$, we have
\begin{equation}\label{eq:ibp}
\int_0^1\!\!\!\int_0^1 C_1(u,v)\,\mathrm{d}C_2(u,v)
= \int_0^1\!\!\!\int_0^1 C_2(u,v)\,\mathrm{d}C_1(u,v)
\end{equation}
(see \cite{Ne06}). Applying~\eqref{eq:ibp} with $C_1= W$ and $C_2 = C$:
\[
\int_0^1\!\!\!\int_0^1 W(u,v)\,\mathrm{d}C(u,v)
= \int_0^1\!\!\!\int_0^1 C(u,v)\,\mathrm{d}W(u,v).
\]
The measure $\mathrm{d}W$ is supported on the anti-diagonal
$\{(u,1-u) : u\in[0,1]\}$, since $W(u,v) = \max\{u+v-1,0\}$ has
all its probability mass on $\{U+V=1\}$ under the countermonotonic
distribution. Hence:
\[
\int_0^1\!\!\!\int_0^1 C(u,v)\,\mathrm{d}W(u,v)
= \int_{0}^{1} C(u,1-u)\,\mathrm{d}u.
\]
Combining with~\eqref{eq:W-as-integral}:
\begin{equation*}
\int_0^1\!\!\!\int_0^1 (u+v-1)^{+}\,\mathrm{d}C(u,v) = \int_{0}^{1} C(u,1-u)\,\mathrm{d}u.
\end{equation*}
Finally, since $(U,V)$ has uniform marginals, $\mathbf{E}_{C}[U+V] = 1$, so
$\mathbf{E}_{C}[1-U-V] = 0$. For any integrable $Z$ with $\mathbf{E}[Z]=0$ one has
$\mathbf{E}[|Z|] = 2\mathbf{E}[Z^{+}]$,
whence
\begin{eqnarray*}
\int_0^1\!\!\!\int_0^1 |1-u-v|\,\mathrm{d}C(u,v)
&=& 2\int_0^1\!\!\!\int_0^1(1-u-v)^{+}\,\mathrm{d}C(u,v)\\
&=& 2\int_0^1\!\!\!\int_0^1 (u+v-1)^{+}\,\mathrm{d}C(u,v)\\
&=& 2\int_{0}^{1} C(u,1-u)\,\mathrm{d}u,
\end{eqnarray*}
where the second equality uses $(1-u-v)^{+} = (u+v-1)^{+}$ when
$\mathbf{E}[1-U-V]=0$, i.e., $\mathbf{E}[(1-U-V)^{+}]=\mathbf{E}[(u+v-1)^{+}]$.
Substituting into~\eqref{eq:L1repr} gives
\begin{equation}\label{cara:Phi}
\Phi_{C} 
= 6\int_{0}^{1} C(u,1-u)\,\mathrm{d}u - 1.
\end{equation}

The following example illustrates the computation of $\Phi_C$ for the Gaussian copula family and yields a closed-form expression in
terms of the parameter $\theta$. Throughout, we denote by $F$ the standard normal distribution function and by $F_\theta$ the bivariate
standard normal distribution function with correlation $\theta$, to avoid notational conflict with the coefficient $\Phi_C$.

\begin{example}
Let $C_\theta(u,v) = F_\theta(F^{-1}(u),F^{-1}(v))$ be the Gaussian copula with parameter $\theta\in[-1,1]$. The substitution
$z=F^{-1}(u)$ in~\eqref{cara:Phi} gives
\[
  \int_0^1 C_\theta(u,1-u)\,\mathrm{d}u
  = \int_{-\infty}^{+\infty} F_\theta(z,-z)\,f(z)\,\mathrm{d}z,
\]
where $f$ is the standard normal density. Introducing an independent $Z_3\sim\mathcal{N}(0,1)$ and setting $W_1=Z_1-Z_3$, $W_2=Z_2+Z_3$, where $(Z_1,Z_2)\sim F_\theta$, one obtains a
bivariate normal vector $(W_1,W_2)$ with zero means, variances~$2$, and $\mathrm{Cov}(W_1,W_2)=\theta-1$, so that $\mathrm{Corr}(W_1,W_2)=(\theta-1)/2$. The orthant probability formula for the bivariate
normal~\cite{Sheppard1900} then yields
\[
  \int_0^1 C_\theta(u,1-u)\,\mathrm{d}u
  = \mathbb{P}(W_1\leq 0,\,W_2\leq 0)
  = \frac{1}{4}+\frac{1}{2\pi}\arcsin\!\left(\frac{\theta-1}{2}\right),
\]
and therefore
\begin{equation}\label{eq:gauss_closed}
  \Phi_{C_\theta}
  = \frac{1}{2}+\frac{3}{\pi}\arcsin\!\left(\frac{\theta-1}{2}\right).
\end{equation}
In particular, $\Phi_{C_0}=0$, consistently with $\Phi_\Pi=0$,
while the limiting values $\rho\to-1$ and $\rho\to 1$ recover
$\Phi_W=-1$ and $\Phi_M=1/2$, respectively.
\end{example}

The following properties of $\Phi_C$ are readily established.
\begin{enumerate}
    \item $\Phi_C$ is not a measure of concordance (e.g., $\Phi_M=1/2\neq 1$).
    \item $\Phi_\Pi=0$.
    \item $\Phi_C=-1$ if, and only if, $C=W$.
    \item If $C_1$ and $C_2$ are two copulas such that  $C_1(u,v)\le C_2(u,v)$ for all $(u,v)\in[0,1]^2$, then $\Phi_{C_1}\le \Phi_{C_2}$.
    \item For any copulas $C_1,C_2$ and any $\alpha\in[0,1]$,
\(
\Phi_{\alpha C_1 + (1-\alpha)C_2} = \alpha \Phi_{C_1} + (1-\alpha)\Phi_{C_2}
\).
\item $\Phi_{C^{T}}=\Phi_C$, where $C^T(u,v)=C(v,u)$ for all $(u,v)\in[0,1]^2$.
\item $\Phi_{\widehat{C}}=\Phi_{C}$, where $\widehat{C}$ is the {\it survival copula} of the copula $C$, which is given by $\widehat{C}(u,v)=u+v-1+C(1-u,1-v)$ for all $(u,v)\in[0,1]^2$.
\item $\Phi_{\tilde{C}} = -\varphi_C$, where $\tilde{C}(u,v)=u-C(u,1-v)$ for all $(u,v)\in[0,1]^2$.
\item For any copula $C$, \(\gamma_C = \tfrac{2}{3}(\varphi_C + \Phi_C)\).
\end{enumerate}

We observe that $\Phi_C=1/2$ does not imply $C=M$, as the following example shows:

\begin{example}
     Consider the copula $C$ defined as
$$C(u,v) = 
\begin{cases} 
\max\displaystyle\left\{\frac{1}{2}, u+v-1\right\}, &  (u,v) \in \left[\tfrac{1}{2},1\right]^2, \\
\min\{u,v\}, & \text{otherwise.}
\end{cases}$$
The mass of $C$ is spread uniformly on two line segments joining the points $(0,0)$ to $(1/2,1/2)$, and $(1/2,1)$ to $(1,1/2)$. Then it is easy to show that $\Phi_C=1/2$.
\end{example}

\section{The sample version}\label{sec:sample}

In this section, we introduce the sample version of $\Phi_C$ and establish 
its asymptotic properties. Let $(X_i, Y_i)$, $i = 1, \ldots, n$, be an 
i.i.d.\ sample from a continuous bivariate distribution with copula $C$, 
and let $R_i = \mathrm{rank}(X_i)$ and $S_i = \mathrm{rank}(Y_i)$ denote 
the ranks of the $i$-th observation within the sample. We define the 
pseudo-observations
\[
\hat{U}_i = \frac{R_i}{n+1}, \qquad \hat{V}_i = \frac{S_i}{n+1},
\]
which take values in $(0,1)$ and serve as rank-based estimates of the unobservable copula arguments $(U_i, V_i)$. The denominator $n+1$, rather than $n$, is the standard choice in the copula literature~\cite{Genest2010}, 
as it avoids boundary issues at $0$ and $1$ and yields pseudo-observations that are marginally uniform on $\{1/(n+1), \ldots, n/(n+1)\}$; we note, however, that the asymptotic results of this section are not affected by this 
choice. The empirical estimator of $\Phi_C$ is then
\[
\widehat{\Phi}_n = \frac{6}{n} \sum_{i=1}^n \left(1 - \hat{U}_i - 
\hat{V}_i\right)^+ - 1,
\]
which, substituting the definition of the pseudo-observations, admits the 
purely rank-based representation
\[
\widehat{\Phi}_n = \frac{6}{n(n+1)} \sum_{i=1}^n \left(n + 1 - R_i - 
S_i\right)^+ - 1.
\]
This expression is computationally straightforward, requiring only the 
computation of ranks, and makes clear that $\widehat{\Phi}_n$ depends on the 
data solely through the rank pairs $(R_i, S_i)$, and hence is invariant 
under strictly increasing transformations of the marginals.

The following result establishes the strong consistency of the empirical $W$-footrule estimator.

\begin{theorem}\label{th:consis}Let $(X_i,Y_i)$, $i=1,\ldots,n$, be an i.i.d.\ sample from a continuous
  bivariate distribution with copula $C$. Then
  \(
    \widehat\Phi_n \xrightarrow{\mathrm{a.s.}} \Phi_C\) as \(n\to\infty.
  \)
\end{theorem}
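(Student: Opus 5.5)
Observe first that the estimator is an integral of the continuous, bounded function $g(u,v)=(1-u-v)^+$ against the empirical copula. Setting
\[
C_n(u,v)=\frac1n\sum_{i=1}^n \mathbf{1}\{\hat U_i\le u,\ \hat V_i\le v\},
\]
we have $\widehat\Phi_n = 6\int g\,\mathrm{d}C_n - 1$. By the derivation leading to \eqref{cara:Phi}, $\Phi_C = 6\int g\,\mathrm{d}C - 1$; there it was shown that $\int(u+v-1)^+\mathrm{d}C=\int_0^1 C(u,1-u)\,\mathrm{d}u$ and that $\mathbf{E}_C[(1-U-V)^+]=\mathbf{E}_C[(U+V-1)^+]$. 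The claim therefore reduces to $\int g\,\mathrm{d}C_n\to\int g\,\mathrm{d}C$ almost surely.

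I would compare with the oracle average $\frac1n\sum_i g(U_i,V_i)$, where $U_i=F(X_i)$ and $V_i=G(Y_i)$ are the (unobservable) probability-integral transforms. These are i.i.d.\ with copula $C$, so the strong law of large numbers gives $\frac1n\sum_i g(U_i,V_i)\to \mathbf{E}_C[g(U,V)]$ almost surely, since $g$ is bounded by $1$.

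It remains to control the difference. The function $g$ is $1$-Lipschitz in each argument, since $|x^+-y^+|\le|x-y|$. Hence
\[
\Bigl|\frac1n\sum_i g(\hat U_i,\hat V_i)-\frac1n\sum_i g(U_i,V_i)\Bigr|
\le \max_i|\hat U_i-U_i|+\max_i|\hat V_i-V_i|.
\]
Write $F_n$ for the empirical distribution function of the $X$-sample, so that $R_i=nF_n(X_i)$. Then
\[
\hat U_i-U_i=\frac{n}{n+1}F_n(X_i)-F(X_i),
\]
and therefore
\[
\max_i|\hat U_i-U_i|\le \sup_x|F_n(x)-F(x)|+\frac1{n+1}.
\]
The Glivenko--Cantelli theorem sends this bound to $0$ almost surely, and the same argument handles the $V$-coordinate with $G_n$ and $G$. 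Continuity of the marginals ensures there are no ties, so the ranks are well defined almost surely. Combining the two steps gives $\widehat\Phi_n\to\Phi_C$ almost surely.

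The main obstacle is the step that replaces the pseudo-observations by the true uniforms. Because of the ranks, the terms $g(\hat U_i,\hat V_i)$ are not independent, so the law of large numbers cannot be applied to them directly. The Lipschitz property of $g$, combined with uniform Glivenko--Cantelli control, removes this dependence cleanly. An alternative route would be the almost sure uniform convergence $\sup|C_n-C|\to0$ (Deheuvels), followed by integration by parts, but the Lipschitz approach is more elementary and is the one I would follow.
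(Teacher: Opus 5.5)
Your proof is correct, but it takes a different route from the paper's.

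\textbf{The paper's route.} The paper writes the estimator as the linear functional $6\int_0^1 C_n(u,1-u)\,\mathrm{d}u-1$ of the empirical copula, up to an asserted $O(n^{-1})$ term. It then invokes the almost sure uniform convergence $\sup|C_n-C|\to0$ of the empirical copula. It concludes from the sup-norm continuity of $C\mapsto\int_0^1 C(u,1-u)\,\mathrm{d}u$. This is essentially the alternative you mention at the end.

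\textbf{Your route.} You compare with the oracle average $\frac1n\sum_i g(U_i,V_i)$ and apply the ordinary strong law. You then absorb the effect of the ranks through the Lipschitz property of $g$ and the univariate Glivenko--Cantelli theorem. This is more elementary and self-contained:
it needs neither the bivariate Glivenko--Cantelli theorem for empirical copulas nor the identity $\int g\,\mathrm{d}C_n=\int_0^1 C_n(s,1-s)\,\mathrm{d}s$. That identity in fact holds exactly, by Fubini, since $(1-u-v)^+=\int_0^1\mathbf{1}\{u\le s,\ v\le 1-s\}\,\mathrm{d}s$ on $[0,1]^2$. Your argument only uses $\Phi_C=6\,\mathbf{E}_C[(1-U-V)^+]-1$, which follows from \eqref{eq:L1repr} and $\mathbf{E}|Z|=2\,\mathbf{E}[Z^+]$ for centred $Z$. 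It also applies verbatim to any bounded uniformly continuous integrand.

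\textbf{What the paper's formulation buys.} It connects directly to the asymptotic-normality argument. Your Lipschitz bound controls the rank-replacement error only at the order of $\sup_x|F_n(x)-F(x)|$, i.e.\ $n^{-1/2}$ in probability. That suffices for consistency. But $n^{-1/2}$ is precisely the scale at which the ranks contribute the correction terms in $j_C$, so your bound cannot be pushed to a CLT. Writing the estimator as $T(C_n)$, with $T$ linear and sup-norm continuous, feeds straight into the functional delta method with the empirical copula process.
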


\begin{proof}
Let $C_n$ denote the empirical copula \cite{Deh1979} based on the pseudo-observations
$(U_i,V_i)$, which is defined by
\[
  C_n(u,v) = \frac{1}{n}\sum_{i=1}^n \mathbf{1}(\hat{U}_i \leq u,\, 
\hat{V}_i \leq v)
\]
where ${\bf 1}_S$ denotes the characteristic function of a set $S$. Since
$$\frac{1}{n}\sum_{i=1}^n (1-\hat{U}_i-\hat{V}_i)^+ = \int_0^1\!\!\!\int_0^1 (1-u-v)^+\,\mathrm{d}C_n(u,v),$$
one can verify that
\begin{equation}\label{eq:equiv}
  \frac{1}{n}\sum_{i=1}^n (1-\hat{U}_i-\hat{V}_i)^+ = \int_0^1 C_n(u,1-u)\,{\rm d}u + O(n^{-1}),
\end{equation}
where the $O(n^{-1})$ term accounts for the discretisation due to the $n/(n+1)$ scaling of pseudo-observations. By the Glivenko-Cantelli theorem for empirical copulas \cite{van2023} (see also \cite{Deh1979,Sharipov2025}),
$\sup_{(u,v)\in[0,1]^2}|C_n(u,v)-C(u,v)|\to 0$ a.s.
Since the map $$C\mapsto \int_0^1 C(u,1-u)\,{\rm d}u$$ is continuous in the uniform norm, we conclude $$\int_0^1 C_n(u,1-u)\,{\rm d}u \to \int_0^1 C(u,1-u)\,{\rm d}u\quad{\rm a.s.},$$ and therefore $\widehat\Phi_n \xrightarrow{\mathrm{a.s.}} \Phi_C$.
\end{proof}

Once consistency has been established, we focus on the asymptotic distribution of the estimator. Under mild regularity conditions on the copula, the following theorem shows that $\widehat{\Phi}_n$ is asymptotically normal ---we recall that the \emph{empirical copula process} is defined as
$\mathbb{G}_n = \sqrt{n}(C_n - C)$, where $C_n$ denotes the empirical copula and $C$ is the true copula.

\begin{theorem}\label{th:variance}
Let $(X_i,Y_i)$, $i=1,\ldots,n$, be an i.i.d.\ sample from a 
continuous bivariate distribution with copula $C$, whose 
first-order partial derivatives $\dot{C}_1 = \partial C/\partial u$ 
and $\dot{C}_2 = \partial C/\partial v$ exist and are continuous 
on $(0,1)^2$. Then
\[
\sqrt{n}(\hat{\Phi}_n - \Phi_C) \xrightarrow{d} \mathcal{N}(0,\sigma^2_\Phi),
\]
where $\sigma^2_\Phi = 36\,\mathrm{Var}_C(j_C(U,V))$, with 
$(U,V)\sim C$ and
\begin{equation}\label{eq:JC}
j_C(u,v) = (1-u-v)^+ 
- \int_u^1 \dot{C}_1(s,1-s)\,\mathrm{d}s 
- \int_0^{1-v} \dot{C}_2(s,1-s)\,\mathrm{d}s,
\end{equation}
for all $(u,v)\in[0,1]^2$.
\end{theorem}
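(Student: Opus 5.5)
The plan is to write $\widehat\Phi_n$ as a continuous linear functional of the empirical copula, up to an $O(n^{-1})$ remainder. Asymptotic normality then follows from the weak convergence of the empirical copula process $\mathbb{G}_n$ together with the (here trivial) functional delta method. Finally, the limiting variance is identified as the variance of an influence function, which should be exactly $j_C$.

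\textbf{Step 1 (reduction to a linear functional).} By \eqref{eq:equiv} from the proof of Theorem~\ref{th:consis},
\[
\widehat\Phi_n - \Phi_C = 6\int_0^1 \bigl(C_n(u,1-u)-C(u,1-u)\bigr)\,\mathrm{d}u + O(n^{-1}).
\]
Multiplying by $\sqrt n$ kills the remainder, so $\sqrt n(\widehat\Phi_n-\Phi_C) = 6\,T(\mathbb{G}_n) + o(1)$. Here $T(g)=\int_0^1 g(u,1-u)\,\mathrm{d}u$ is a linear map on $\ell^\infty([0,1]^2)$ with $|T(g)|\le\|g\|_\infty$. Hence $T$ is continuous and Hadamard differentiable with derivative $T$ itself, so the functional delta method reduces to the continuous mapping theorem. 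I would double-check that the $O(n^{-1})$ in \eqref{eq:equiv} is deterministic (it comes from the $n/(n+1)$ rescaling and the at most $n$ jumps of $C_n$), so that it is indeed $o(n^{-1/2})$ uniformly.

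\textbf{Step 2 (weak limit).} Under the stated hypothesis ($\dot C_1,\dot C_2$ exist and are continuous on the open sets where they are needed), Segers' result gives $\mathbb{G}_n \rightsquigarrow \mathbb{G}_C$ in $\ell^\infty([0,1]^2)$, where
\[
\mathbb{G}_C(u,v) = \mathbb{B}_C(u,v) - \dot C_1(u,v)\,\mathbb{B}_C(u,1) - \dot C_2(u,v)\,\mathbb{B}_C(1,v)
\]
and $\mathbb{B}_C$ is a $C$-Brownian bridge. Strictly, that result needs $\dot C_1$ continuous on $(0,1)\times[0,1]$ and $\dot C_2$ on $[0,1]\times(0,1)$; I would invoke it under this reading of the hypothesis. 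Since $T$ is continuous, $6\,T(\mathbb{G}_n)\rightsquigarrow 6\,T(\mathbb{G}_C)$, which is a centred Gaussian variable as a bounded linear image of a Gaussian process. The partial derivatives lie in $[0,1]$, so all integrals along the anti-diagonal are well defined even where the derivatives fail to exist on a null set.

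\textbf{Step 3 (variance identification).} This step is the main computational point. Write $\mathbb{B}_C$ as the limit of $n^{-1/2}\sum_i\bigl(\mathbf 1(U_i\le u,V_i\le v)-C(u,v)\bigr)$. Then $T(\mathbb{G}_C)$ is the Gaussian limit of $n^{-1/2}\sum_i \bigl(k(U_i,V_i)-\mathbf{E}\,k\bigr)$, where $k$ is obtained by applying $T$ to the three indicator terms. By Fubini:
\begin{itemize}
\item $\int_0^1\mathbf 1(U\le s,\,V\le 1-s)\,\mathrm{d}s=(1-U-V)^+$;
\item $\int_0^1 \dot C_1(s,1-s)\mathbf 1(U\le s)\,\mathrm{d}s=\int_U^1\dot C_1(s,1-s)\,\mathrm{d}s$;
\item $\int_0^1 \dot C_2(s,1-s)\mathbf 1(V\le 1-s)\,\mathrm{d}s=\int_0^{1-V}\dot C_2(s,1-s)\,\mathrm{d}s$.
\end{itemize}
Hence $k=j_C$ as in \eqref{eq:JC}. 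The central limit theorem then gives $\operatorname{Var}(T(\mathbb{G}_C))=\operatorname{Var}_C(j_C(U,V))$, and so $\sigma^2_\Phi=36\operatorname{Var}_C(j_C(U,V))$.

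Making this rigorous requires checking that the covariance of $T(\mathbb{G}_C)$, computed directly from the covariance kernel $C(u\wedge u',v\wedge v')-C(u,v)C(u',v')$ of $\mathbb{B}_C$, coincides with $\operatorname{Var}(j_C)$. This amounts to a Fubini interchange with bounded integrands, so it should pose no difficulty.
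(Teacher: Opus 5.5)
Your proposal is correct and follows the paper's route: write $\widehat{\Phi}_n$ as the bounded linear functional $6\int_0^1 C_n(u,1-u)\,\mathrm{d}u-1$ of the empirical copula, invoke Segers' weak convergence of $\mathbb{G}_n$ with the trivial delta method, and identify the limiting variance by applying Fubini to the three indicator terms, which yields exactly $j_C$. Two details of your version are cleaner than the paper's. First, your first Fubini bullet, applied to the pseudo-observations, shows the remainder in \eqref{eq:equiv} is identically zero. Second, by centring with $\mathbf{E}_C[j_C(U,V)]$ you avoid the paper's claim $\mathbf{E}_C[j_C(U,V)]=0$; that claim fails in general (for $C=\Pi$ each of the three terms of $j_\Pi$ has mean $1/6$, so $\mathbf{E}_\Pi[j_\Pi]=-1/6$), but the error is harmless here because only $\mathrm{Var}_C(j_C(U,V))$ enters $\sigma^2_\Phi$.
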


\begin{proof}
Define the functional $T : \mathcal{C} \to \mathbb{R}$ by
\[
T(C) = 6\int_0^1 C(u,1-u)\,\mathrm{d}u - 1,
\]
so that $\widehat{\Phi}_n = T(C_n) + O(n^{-1})$, where $C_n$ denotes the empirical copula based on the pseudo-observations $(\hat{U}_i, \hat{V}_i)$.

The map $T$ is linear and continuous on $\ell^\infty([0,1]^2)$ with respect to the uniform norm $\|\cdot\|_\infty$: for any
$h \in \ell^\infty([0,1]^2)$,
\[
|\dot{T}(h)| = 6\left|\int_0^1 h(u,1-u)\,\mathrm{d}u\right|
\leq 6\,\|h\|_\infty < \infty.
\]
Since every continuous linear map on a normed space is Hadamard differentiable (indeed, Fréchet differentiable), $T$ is Hadamard
differentiable at every $C\in\mathcal{C}$, with derivative
\[
\dot{T}_C(h) = 6\int_0^1 h(u,1-u)\,\mathrm{d}u,
\quad h \in \ell^\infty([0,1]^2).
\]

Under the assumption that the first-order partial derivatives
$\dot{C}_1 = \partial C/\partial u$ and $\dot{C}_2 = \partial C/\partial v$
exist and are continuous on $(0,1)^2$, by \cite{Segers2012,Tsu2005} it is guaranteed that the empirical copula process
$\mathbb{G}_n = \sqrt{n}(C_n - C)$ converges weakly in $\ell^\infty([0,1]^2)$ to the tight Gaussian process
\[
\mathbb{G}_C(u,v) = \alpha_C(u,v)
- \dot{C}_1(u,v)\,\alpha_C(u,1)
- \dot{C}_2(u,v)\,\alpha_C(1,v),
\]
where $\alpha_C$ is a $C$-{\it Brownian bridge}, i.e., a centred Gaussian process with covariance
\[
\mathrm{Cov}\bigl(\alpha_C(u_1,v_1),\,\alpha_C(u_2,v_2)\bigr)
= C(\min\{u_1,u_2\},\min\{v_1,v_2\}) - C(u_1,v_1)\,C(u_2,v_2).
\]

Since $\widehat{\Phi}_n = T(C_n) + O(n^{-1})$, we have
\[
\sqrt{n}\,(\widehat{\Phi}_n - \Phi_C)
= \sqrt{n}\,(T(C_n) - T(C)) + O(n^{-1/2}).
\]
By the functional delta method \cite{Fermanian2004,van1998}, we have
\[
\sqrt{n}\,(T(C_n) - T(C))
\xrightarrow{d}
\dot{T}_C(\mathbb{G}_C)
= 6\int_0^1 \mathbb{G}_C(u,1-u)\,\mathrm{d}u.
\]
The right-hand side is a centred Gaussian random variable, being a
continuous linear functional of the Gaussian process $\mathbb{G}_C$. Substituting the expression for $\mathbb{G}_C$ and applying Fubini's
theorem,
\begin{align*}
6\int_0^1 &\mathbb{G}_C(u,1-u)\,\mathrm{d}u
\\
&= 6\int_0^1 \Bigl[
    \alpha_C(u,1-u)
    - \dot{C}_1(u,1-u)\,\alpha_C(u,1)
    - \dot{C}_2(u,1-u)\,\alpha_C(1,1-u)
  \Bigr]\,\mathrm{d}u.
\end{align*}
We show this equals $$6\sum_{i=1}^n \frac{1}{\sqrt{n}}[j_C(\hat{U}_i,\hat{V}_i)
- \mathbb{E}_C j_C]$$ in distribution, by identifying the influence
function. Using the covariance structure of $\alpha_C$ and the
representation
\[
\alpha_C(u,v) = \lim_{n\to\infty}
\frac{1}{\sqrt{n}}\sum_{i=1}^n
\bigl[\mathbf{1}(\hat{U}_i\leq u, \hat{V}_i\leq v) - C(u,v)\bigr],
\]
one computes
\begin{align*}
6\int_0^1 \mathbb{G}_C(s,1-s)\,\mathrm{d}s
&\stackrel{d}{=}
\frac{6}{\sqrt{n}}\sum_{i=1}^n \Bigl[
  (1-\hat{U}_i-\hat{V}_i)^+
  - \int_{\hat{U}_i}^1 \dot{C}_1(s,1-s)\,\mathrm{d}s\\
  &\quad- \int_0^{1-\hat{V}_i} \dot{C}_2(s,1-s)\,\mathrm{d}s
  - \mathbb{E}_C[j_C(U,V)]
\Bigr],
\end{align*}
where $j_C$ is given by~\eqref{eq:JC}. It remains to verify that
$\mathbb{E}_C[j_C(U,V)] = 0$. Using Fubini's theorem,
\begin{align*}
\mathbb{E}_C\!\left[\int_U^1 \dot{C}_1(s,1-s)\,\mathrm{d}s\right]
&= \int_0^1\!\!\!\int_0^1\!\!\!\int_u^1
   \dot{C}_1(s,1-s)\,\mathrm{d}s\,\mathrm{d}C(u,v) \\
&= \int_0^1 \dot{C}_1(s,1-s)
   \left(\int_0^s\!\!\!\int_0^1 \mathrm{d}C(u,v)\right)\mathrm{d}s \\
 &= \int_0^1 \dot{C}_1(s,1-s)\,s\,\mathrm{d}s,
\end{align*}
and, integrating by parts,
\[
\int_0^1 s\,\dot{C}_1(s,1-s)\,\mathrm{d}s
= \Bigl[s\,C(s,1-s)\Bigr]_0^1
  - \int_0^1 C(s,1-s)\,\mathrm{d}s
  - \int_0^1 s\,(-1)\,\dot{C}_2(s,1-s)\,\mathrm{d}s,
\]
which, combined with the analogous calculation for the $\dot{C}_2$
term and the identity
$$\mathbb{E}_C[(1-U-V)^+] = \int_0^1 C(u,1-u)\,\mathrm{d}u$$
(from~\eqref{cara:Phi}), gives $\mathbb{E}_C[j_C(U,V)] = 0$.

The asymptotic distribution of $\sqrt{n}(\widehat{\Phi}_n - \Phi_C)$
is therefore that of $$\frac{6}{\sqrt{n}}\sum_{i=1}^n j_C(U_i,V_i),$$
which by the classical central limit theorem converges in distribution
to $$\mathcal{N}(0,\,36\,\mathrm{Var}_C(j_C(U,V))).$$
Setting $\sigma^2_\Phi = 36\,\mathrm{Var}_C(j_C(U,V))$ completes the proof.
\end{proof}

\begin{remark}
The continuity of the first-order partial derivatives of $C$ in Theorem \ref{th:variance} is the standard regularity assumption under which the empirical copula process
$\mathbb{G}_n = \sqrt{n}(C_n - C)$ converges. This condition is satisfied by the Clayton copula (for $\theta > 0$), the Gaussian copula (for $|\rho| < 1$), and the Gumbel copula (for $\theta \geq 1$), all of which are used in Section~\ref{sec:simulation}. The boundary case $\rho = 1$ (copula $M$) does \emph{not} satisfy this condition, since $M$ has no density. The simulation results for that scenario are therefore purely illustrative and lie outside the scope of Theorem~\ref{th:variance}.
\end{remark}

Theorem~\ref{th:variance} provides the asymptotic variance $\sigma^2_\Phi$ in closed form, but in practice it must be estimated from the data. The natural plug-in estimator replaces the unknown expectation $\mathbb{E}_C(1-U-V)^+$ by its sample counterpart, yielding a sample variance of the transformed pseudo-observations
$\{(1-\hat{U}_i-\hat{V}_i)^+\}$. This estimator is strongly consistent and can be used directly to construct asymptotic confidence intervals for $\Phi_C$.

\begin{proposition}\label{prop:var}
The statistic
\[
\widehat{\sigma}^2_\Phi = \frac{36}{n-1}\sum_{i=1}^n
\left[\hat{j}_n(\hat{U}_i,\hat{V}_i) - 
\frac{1}{n}\sum_{k=1}^n 
\hat{j}_n(\hat{U}_k,\hat{V}_k)\right]^2
\]
is a strongly consistent estimator of $\sigma^2_\Phi$, where
\[
\hat{j}_n(u,v) = (1-u-v)^+ 
- \int_u^1 \hat{\dot{C}}_{n,1}(s,1-s)\,\mathrm{d}s 
- \int_0^{1-v} \hat{\dot{C}}_{n,2}(s,1-s)\,\mathrm{d}s,
\]
and $\hat{\dot{C}}_{n,1}$, $\hat{\dot{C}}_{n,2}$ are consistent 
estimators of $\dot{C}_1$ and $\dot{C}_2$, respectively. An 
asymptotic $100(1-\alpha)\%$ confidence interval for $\Phi_C$ is
\[
\hat{\Phi}_n \pm z_{\alpha/2}\,\frac{\hat{\sigma}_\Phi}{\sqrt{n}},
\]
where $z_{\alpha/2}$ is the upper $\alpha/2$ quantile of 
$\mathcal{N}(0,1)$.
\end{proposition}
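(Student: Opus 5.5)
The plan is to reduce the strong consistency of $\widehat{\sigma}^2_\Phi$ to a uniform approximation of $j_C$ by $\hat{j}_n$, followed by a strong law for the pseudo-observations. I will read ``consistent estimators of $\dot C_1,\dot C_2$'' in the strong sense needed. Concretely, I assume that almost surely, for every $\delta\in(0,1/2)$,
\[
\sup_{s\in[\delta,1-\delta]}\bigl|\hat{\dot C}_{n,k}(s,1-s)-\dot C_k(s,1-s)\bigr|\to 0,\qquad k=1,2,
\]
and that the estimators take values in $[0,1]$, as partial derivatives of copulas do. The usual truncated finite-difference estimators of the empirical copula satisfy both conditions, by the Glivenko--Cantelli statement used in Theorem~\ref{th:consis}.

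\emph{Step 1 (uniform approximation).} I will show that $\sup_{(u,v)\in[0,1]^2}|\hat j_n(u,v)-j_C(u,v)|\to 0$ a.s. The $(1-u-v)^+$ terms cancel. For each integral term, split $[0,1]$ into $[\delta,1-\delta]$ and the two end pieces. On the middle piece, the difference is bounded by the uniform convergence above. On the end pieces, it is bounded by $4\delta$, since all integrands lie in $[0,1]$. Letting $n\to\infty$ and then $\delta\downarrow0$ gives the claim. This bound is uniform in $(u,v)$, because the integration limits only restrict a subinterval of $[0,1]$.

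\emph{Step 2 (reduction to a fixed function).} Write $\widehat{\sigma}^2_\Phi/36 = \frac{n}{n-1}\bigl(\overline{\hat j_n^2}-\overline{\hat j_n}^{\,2}\bigr)$, where the bars denote averages over $i$. All the functions involved are bounded by $3$. Step 1 then lets me replace $\hat j_n$ by $j_C$ in both averages, with an error of $O(\sup|\hat j_n-j_C|)\to0$ a.s. It therefore suffices to show
\[
\frac1n\sum_{i=1}^n g(\hat U_i,\hat V_i)\to \mathbb E_C[g(U,V)]\quad\text{a.s.}\quad\text{for } g=j_C \text{ and } g=j_C^2 .
\]
This would give $\widehat{\sigma}^2_\Phi\to 36(\mathbb E j_C^2-(\mathbb E j_C)^2)=\sigma^2_\Phi$. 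In fact $\mathbb E_C j_C=0$, which was shown in the proof of Theorem~\ref{th:variance}.

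\emph{Step 3 (strong law for pseudo-observations).} This is the main obstacle, because the $(\hat U_i,\hat V_i)$ are not i.i.d. The function $j_C$ is continuous on $[0,1]^2$: the integrals are continuous in their limits because the integrands are bounded, and $(1-u-v)^+$ is continuous. Hence $j_C$ and $j_C^2$ are bounded and continuous, so in particular continuous on the compact square. The averages $\frac1n\sum g(\hat U_i,\hat V_i)$ equal $\int g\,\mathrm dC_n$. The Glivenko--Cantelli theorem for empirical copulas gives $C_n\to C$ uniformly a.s., which implies weak convergence of the probability measures $\mathrm dC_n\Rightarrow \mathrm dC$ a.s. The portmanteau theorem then yields $\int g\,\mathrm dC_n\to\int g\,\mathrm dC$ a.s. If one prefers to avoid the weak-convergence argument, an alternative is a direct comparison: $|\hat U_i-U_i|\le\sup|F_n-F|\cdot\frac{n}{n+1}+\frac1{n+1}\to0$ a.s. uniformly in $i$. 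Uniform continuity of $g$ together with the classical SLLN for $g(U_i,V_i)$ then finishes this step.

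\emph{Step 4 (confidence interval).} Combine Theorem~\ref{th:variance}, the a.s. (hence in probability) convergence $\widehat\sigma_\Phi\to\sigma_\Phi$, and Slutsky's lemma. This gives $\sqrt n(\widehat\Phi_n-\Phi_C)/\widehat\sigma_\Phi\xrightarrow{d}\mathcal N(0,1)$ whenever $\sigma_\Phi>0$, and the stated interval then has asymptotic coverage $1-\alpha$.
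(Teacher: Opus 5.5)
Your proof is correct. It shares the paper's skeleton: uniform convergence $\hat j_n\to j_C$, a strong law, then Slutsky. It is, however, more careful exactly where the paper is thin.

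\begin{itemize}
\item The paper simply takes $\hat{\dot C}_{n,k}\to\dot C_k$ uniformly a.s.\ as given, with no restriction near the corners. Your $\delta$-truncation needs only convergence on compact pieces of the open anti-diagonal, together with $[0,1]$-valued estimators.
\item The paper asserts $\hat Z_i\to j_C(U_i,V_i)$ and applies the strong law directly to the $\hat Z_i$, backed by a uniform-integrability remark. The $\hat Z_i$ are not i.i.d., so this does not justify the a.s.\ convergence. Your Step~3 does: either weak convergence of $\mathrm{d}C_n$, or $\max_i|\hat U_i-U_i|\to0$ combined with uniform continuity of $j_C$ and the classical SLLN.
\item You identify the limit as $36\bigl(\mathbb{E}_C j_C^2-(\mathbb{E}_C j_C)^2\bigr)$ with no centring assumption. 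The paper's argument instead runs through $\mathbb{E}_C j_C=0$.
\end{itemize}

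Drop your aside that $\mathbb{E}_C j_C=0$, because it is false in general. For $C=\Pi$ one gets $j_\Pi(u,v)=(1-u-v)^+-(1-u)^2/2-(1-v)^2/2$, hence $\mathbb{E}\, j_\Pi=\tfrac16-\tfrac16-\tfrac16=-\tfrac16$. In general $\mathbb{E}_C j_C=2\int_0^1 C(s,1-s)\,\mathrm{d}s-\mathbb{P}(U+V\le 1)$. Your proof never uses this identity, and $\widehat\sigma^2_\Phi$ is a centred sample variance, so nothing breaks.

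A minor point on your parenthetical about finite-difference estimators with bandwidth $h_n\to0$. Glivenko--Cantelli alone does not make them converge, because you need $\|C_n-C\|_\infty=o(h_n)$ a.s. That requires an a.s.\ rate, e.g.\ $O(\sqrt{\log\log n/n})$, and a bandwidth chosen accordingly.
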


\begin{proof}
Let $\hat{Z}_i = \hat{j}_n(\hat{U}_i,\hat{V}_i)$. Since 
$\hat{\dot{C}}_{n,1} \to \dot{C}_1$ and 
$\hat{\dot{C}}_{n,2} \to \dot{C}_2$ uniformly almost surely, 
we have $\hat{j}_n(u,v) \to j_C(u,v)$ uniformly on $[0,1]^2$ 
almost surely, and hence
\[
\hat{Z}_i \xrightarrow{a.s.} j_C(U_i,V_i) =: Z_i.
\]
Since $j_C$ is bounded on $[0,1]^2$, the sequence 
$\{\hat{Z}_i^2\}$ is uniformly integrable. By the strong law 
of large numbers,
\[
\frac{1}{n}\sum_{i=1}^n \hat{Z}_i^2 \xrightarrow{a.s.} 
\mathbf{E}_C[j_C(U,V)^2],
\qquad
\frac{1}{n}\sum_{i=1}^n \hat{Z}_i \xrightarrow{a.s.} 
\mathbf{E}_C[j_C(U,V)] = 0,
\]
where the last equality holds since $j_C$ is the influence 
function of $\widehat{\Phi}_n$, which is centred. Therefore,
\[
\widehat{\sigma}^2_\Phi \xrightarrow{a.s.} 
36\,\mathbf{E}_C[j_C(U,V)^2] = 
36\,\mathrm{Var}_C(j_C(U,V)) = \sigma^2_\Phi.
\]
The confidence interval follows from Theorem~\ref{th:variance} 
and Slutsky's theorem (see, e.g., \cite{van1998}).
\end{proof}

As an immediate application of Theorem~\ref{th:variance}, one can
construct a formal test for the hypothesis of perfect negative
dependence. Consider the null hypothesis $H_0: C = W$, equivalently
$H_0: \Phi_C = -1$. Since $\Phi_W = -1$ and $\widehat{\Phi}_n
\xrightarrow{\mathrm{a.s.}} \Phi_C$, a natural test statistic is
\[
  T_n = \frac{\sqrt{n}\,(\widehat{\Phi}_n + 1)}{\widehat{\sigma}_\Phi},
\]
where $\widehat{\sigma}_\Phi$ is the consistent estimator of
$\sigma_\Phi$ given in Proposition~\ref{prop:var}. Under $H_0$,
Theorem~\ref{th:variance} and Slutsky's theorem give
$T_n \xrightarrow{d} \mathcal{N}(0,1)$, so that $H_0$ is rejected
at asymptotic level $\alpha$ whenever $T_n > z_\alpha$, where
$z_\alpha$ is the upper $\alpha$ quantile of $\mathcal{N}(0,1)$.
Note that this is a one-sided test, since $\Phi_C \geq -1$ for all
copulas $C$, with equality only at $W$.

The influence function formalises how sensitive a functional is to a
small perturbation of the underlying distribution.
Specifically, if one contaminates $C$ with a fraction $\varepsilon$ of
point mass concentrated at a single point $(u,v)$, forming the mixture
\[
  C_\varepsilon = (1-\varepsilon)\,C + \varepsilon\,\delta_{(u,v)},
\]
where $\delta_{(u,v)}$ is the unit point mass at $(u,v)$, the influence
function measures the first-order effect on the functional:

\begin{definition}
  The influence function of the functional $\Phi$ at $C$ is
  \[
    \mathrm{IF}\!\left((u,v);\,\Phi,\,C\right)
    = \lim_{\varepsilon\to 0^+}
      \frac{\Phi_{C_\varepsilon} - \Phi_C}{\varepsilon}.
  \]
\end{definition}

The influence function of $\Phi_C$ can be computed explicitly by exploiting its linearity.

\begin{proposition}\label{prop:IF}
For any copula $C$ and any $(u,v)\in[0,1]^2$,
\[
\mathrm{IF}((u,v);\Phi,C) = 6\,j_C(u,v),
\]
where $j_C$ is given by \eqref{eq:JC}.
\end{proposition}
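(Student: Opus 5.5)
The plan is to read the statement as concerning the \emph{rank-based} functional, not the naive linear functional. The contaminated mixture $C_\varepsilon=(1-\varepsilon)C+\varepsilon\,\delta_{(u,v)}$ is no longer a copula, because its margins are not uniform. If we plugged it directly into the $L^1$-representation \eqref{eq:L1repr}, linearity would only give $3|1-u-v|-1-\Phi_C$, which is not $6\,j_C(u,v)$. So for a bivariate distribution $H$ on $[0,1]^2$ with continuous-ish margins $F,G$, I define $\Phi_H:=T(C_H)$. Here $C_H(s,t)=H(F^{-1}(s),G^{-1}(t))$ and $T(C)=6\int_0^1 C(s,1-s)\,\mathrm{d}s-1$ is the functional from the proof of Theorem~\ref{th:variance}. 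This is the functional that $\widehat\Phi_n$ estimates, and its influence function is the Gateaux derivative of $\varepsilon\mapsto T(C_{H_\varepsilon})$ at $\varepsilon=0$, with $H_\varepsilon=(1-\varepsilon)C+\varepsilon\,\delta_{(u,v)}$. As in Theorem~\ref{th:variance}, I will assume that $\dot C_1,\dot C_2$ exist and are continuous on $(0,1)^2$.

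The first step is a chain rule. The map $T$ is linear and bounded, with derivative $\dot T_C(h)=6\int_0^1h(s,1-s)\,\mathrm{d}s$, as already noted. The map $H\mapsto C_H$ is Hadamard differentiable at $C$ in the direction $h=\delta_{(u,v)}-C$, where $\delta_{(u,v)}$ now denotes the distribution function $\mathbf 1(u\le s,\,v\le t)$ of the point mass. Its derivative is
\[
h(s,t)-\dot C_1(s,t)\,h(s,1)-\dot C_2(s,t)\,h(1,t),
\]
which is the same structure as the limit process $\mathbb G_C$ (cf.\ \cite{Segers2012,Fermanian2004}). Composing, the influence function is $6\int_0^1 D(s)\,\mathrm{d}s$ with
\[
D(s)=\bigl[\mathbf 1(u\le s,v\le 1-s)-C(s,1-s)\bigr]-\dot C_1(s,1-s)\bigl[\mathbf 1(u\le s)-s\bigr]-\dot C_2(s,1-s)\bigl[\mathbf 1(v\le 1-s)-(1-s)\bigr].
\]

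The second step is to integrate $D$ term by term. First, $\int_0^1\mathbf 1(u\le s\le 1-v)\,\mathrm{d}s=(1-u-v)^+$. Second, $\int_0^1\dot C_1(s,1-s)\mathbf 1(u\le s)\,\mathrm{d}s=\int_u^1\dot C_1(s,1-s)\,\mathrm{d}s$. Third, $\int_0^1\dot C_2(s,1-s)\mathbf 1(s\le 1-v)\,\mathrm{d}s=\int_0^{1-v}\dot C_2(s,1-s)\,\mathrm{d}s$. These three pieces reproduce exactly $j_C(u,v)$ from \eqref{eq:JC}. What remains is the constant
\[
-\int_0^1C(s,1-s)\,\mathrm{d}s+\int_0^1 s\,\dot C_1(s,1-s)\,\mathrm{d}s+\int_0^1(1-s)\,\dot C_2(s,1-s)\,\mathrm{d}s,
\]
and I must show it vanishes. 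To do so I will reuse the integration-by-parts computation in the proof of Theorem~\ref{th:variance}. That argument showed $\mathbb E_C[j_C(U,V)]=0$, and $\mathbb E_C[j_C]$ equals precisely minus this constant. Alternatively, one can differentiate $s\mapsto sC(s,1-s)$ and $s\mapsto(1-s)C(s,1-s)$ and use the boundary values $C(0,1)=C(1,0)=0$. Either way the constant is zero, so the influence function equals $6\,j_C(u,v)$.

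I expect the main obstacle to be the first step: rigorously justifying the derivative of $H\mapsto C_H$ along a point-mass direction. The contaminated margins $F_\varepsilon$ have a jump at $u$, so $F_\varepsilon^{-1}$ is only a generalized inverse. My approach is to compute $F_\varepsilon^{-1}(s)$ explicitly: it is piecewise linear, equal to $s/(1-\varepsilon)$ below the atom and $(s-\varepsilon)/(1-\varepsilon)$ above it, and constant on the flat stretch at the atom. I then expand $C_{H_\varepsilon}(s,1-s)$ to first order in $\varepsilon$ for each $s$ away from the finitely many exceptional points. Because these expansions are bounded uniformly in $s$, dominated convergence lets me pass the limit under $\int_0^1\,\mathrm{d}s$. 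Points $(u,v)$ on the boundary of $[0,1]^2$, where $\dot C_i$ may fail to be continuous, are handled by the same computation, since only integrals of $\dot C_i$ along the anti-diagonal, which are bounded by $1$, enter.
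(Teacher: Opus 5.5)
There is a genuine gap, and it cannot be closed because the statement is false as written.

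Your reduction to the rank-based functional $H\mapsto T(C_H)$ is sound. Your chain-rule computation is also correct, and it is the route the paper's proof only gestures at (``a standard calculation using the chain rule for Hadamard derivatives''). Your explicit expansion of $F_\varepsilon^{-1}$ is an adequate way around the fact that $\delta_{(u,v)}-C$ is not a continuous direction.

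\textbf{The failing step.} The last step, $K=0$, is wrong. Here
\[
K=-\int_0^1 C(s,1-s)\,\mathrm{d}s+\int_0^1 s\,\dot C_1(s,1-s)\,\mathrm{d}s+\int_0^1(1-s)\,\dot C_2(s,1-s)\,\mathrm{d}s=-\mathbb{E}_C[j_C(U,V)].
\]
Differentiating $sC(s,1-s)$ and $(1-s)C(s,1-s)$ yields only two identities: $\int_0^1 s\,(\dot C_1-\dot C_2)(s,1-s)\,\mathrm{d}s=-\int_0^1C(s,1-s)\,\mathrm{d}s$ and $\int_0^1(\dot C_1-\dot C_2)(s,1-s)\,\mathrm{d}s=0$. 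Substituting them gives
\[
K=\int_0^1\dot C_2(s,1-s)\,\mathrm{d}s-2\int_0^1C(s,1-s)\,\mathrm{d}s=\mathbb{P}_C(U+V\le 1)-2\,\mathbb{E}_C[(1-U-V)^+],
\]
which does not vanish in general.

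\textbf{Counterexample.} Take $C=\Pi$. Then $j_\Pi(u,v)=(1-u-v)^+-\frac12(1-u)^2-\frac12(1-v)^2$, so $\mathbb{E}_\Pi[j_\Pi]=\frac16-\frac16-\frac16=-\frac16$ and $K=\frac16$. A direct check confirms this. Contaminating $\Pi$ at $(1,1)$ multiplies the ranks of the remaining mass by $1-\varepsilon$. One finds $\int_0^1C_{H_\varepsilon}(s,1-s)\,\mathrm{d}s=\frac16+\frac{\varepsilon}{6}+O(\varepsilon^2)$, so $\mathrm{IF}((1,1);\Phi,\Pi)=1$, whereas $6\,j_\Pi(1,1)=0$. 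More generally, every Gaussian copula with $|\theta|<1$ is radially symmetric, so $\mathbb{P}_C(U+V\le1)=\frac12$ and $K=-\frac1\pi\arcsin\frac{\theta-1}{2}>0$.

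\textbf{What your computation actually proves.} It correctly establishes
\[
\mathrm{IF}((u,v);\Phi,C)=6\bigl(j_C(u,v)-\mathbb{E}_C[j_C(U,V)]\bigr).
\]
Write the chain-rule derivative as $L_C(h)=6\int_0^1[h(s,1-s)-\dot C_1(s,1-s)h(s,1)-\dot C_2(s,1-s)h(1,1-s)]\,\mathrm{d}s$. Then $6\,j_C(u,v)=L_C(\delta_{(u,v)})$ is the derivative applied to the uncentred direction. The influence function is $L_C(\delta_{(u,v)}-C)$, and $L_C(C)=6\,\mathbb{E}_C[j_C]\neq0$ in general. The proposition therefore holds only when $\mathbb{P}_C(U+V\le1)=2\int_0^1C(s,1-s)\,\mathrm{d}s$, as for $M$.

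\textbf{Where the error comes from.} You import the claim $\mathbb{E}_C[j_C(U,V)]=0$ from the proof of Theorem~\ref{th:variance}, and that claim is wrong for the same reason. The paper's proof of this proposition only asserts the final formula, so it does not supply the missing step either. The asymptotic variance $36\,\mathrm{Var}_C(j_C(U,V))$ is unaffected, since a constant shift does not change it. The pointwise identity, however, must be stated with the centred $j_C$.
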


\begin{proof}
Let $C_\varepsilon = (1-\varepsilon)C + \varepsilon\,\delta_{(u,v)}$,
where $\delta_{(u,v)}$ is the unit point mass at $(u,v)\in[0,1]^2$.
By definition,
\[
\mathrm{IF}((u,v);\Phi,C) = 
\lim_{\varepsilon\to 0^+}
\frac{\Phi_{C_\varepsilon} - \Phi_C}{\varepsilon}.
\]
Since $T(C) = 6\int_0^1 C(u,1-u)\,\mathrm{d}u - 1$ is linear in 
$C$, we have
\[
\Phi_{C_\varepsilon} = T(C_\varepsilon) = 
(1-\varepsilon)\,T(C) + \varepsilon\,T(\delta_{(u,v)}),
\]
and hence
\[
\frac{\Phi_{C_\varepsilon}-\Phi_C}{\varepsilon} = 
T(\delta_{(u,v)}) - T(C) = 
6(1-u-v)^+ - 1 - \Phi_C.
\]
Using~\eqref{cara:Phi},
\[
\Phi_C = 6\,\mathbf{E}_C[(1-U-V)^+] - 1,
\]
so that
\[
\mathrm{IF}((u,v);\Phi,C) = 
6\left[(1-u-v)^+ - \mathbf{E}_C[(1-U-V)^+]\right].
\]
This is the influence function of the known-margins functional $T$. For the rank-based estimator $\widehat{\Phi}_n$, the estimation of the marginal distributions via ranks introduces 
additional terms. Specifically, contaminating $C$ with a point mass at $(u,v)$ also perturbs both marginals, and the first-order effect of this perturbation on $T$ is captured by the correction terms in $j_C$. A standard calculation using the 
chain rule for Hadamard derivatives gives
\[
\mathrm{IF}((u,v);\Phi,C) = 6\,j_C(u,v),
\]
which completes the proof.
\end{proof}

Since $(1-u-v)^+$ is bounded on $[0,1]^2$, the influence function obtained in Proposition~\ref{prop:IF} is automatically bounded, which translates into a formal robustness guarantee.

\begin{theorem}\label{thm:robustness}
The influence function of $\widehat{\Phi}_n$ is uniformly bounded: 
for all $(u,v)\in[0,1]^2$ and all copulas $C$ with continuous 
partial derivatives $\dot{C}_1$ and $\dot{C}_2$ on $(0,1)^2$,
\[
|\mathrm{IF}((u,v);\Phi,C)| = 6\,|j_C(u,v)| \leq 12.
\]
Consequently, $\widehat{\Phi}_n$ is robust in the sense of Hampel, meaning that a single outlying observation cannot change the value of $\widehat{\Phi}_n$ by more than $12/n$.
\end{theorem}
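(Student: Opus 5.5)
We will bound each of the three terms of $j_C$ in \eqref{eq:JC} separately and then use the triangle inequality, after which the Hampel statement follows from Proposition~\ref{prop:IF} together with a first-order expansion of $\widehat{\Phi}_n$. The key facts are elementary: for any copula $C$, the partial derivatives satisfy $0\le \dot{C}_1(s,t)\le 1$ and $0\le \dot{C}_2(s,t)\le 1$ wherever they exist. This holds because $C$ is $1$-Lipschitz in each argument (a consequence of the $2$-increasing property and uniform margins) and nondecreasing in each argument. Under the hypothesis of Theorem~\ref{th:variance}, these derivatives are continuous on $(0,1)^2$. Therefore $s\mapsto \dot{C}_1(s,1-s)$ and $s\mapsto \dot{C}_2(s,1-s)$ are measurable with values in $[0,1]$, and both integrals in \eqref{eq:JC} are well defined.

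\textbf{Bounding the terms.} Fix $(u,v)\in[0,1]^2$. First, $0\le (1-u-v)^+\le 1$. Second, $0\le \int_u^1 \dot{C}_1(s,1-s)\,\mathrm{d}s\le 1-u\le 1$. Third, $0\le \int_0^{1-v}\dot{C}_2(s,1-s)\,\mathrm{d}s\le 1-v\le 1$. Hence $j_C(u,v)$ is a nonnegative quantity minus two nonnegative quantities, which gives
\[
-2\le -(1-u)-(1-v)\le j_C(u,v)\le (1-u-v)^+\le 1.
\]
So $|j_C(u,v)|\le 2$, and $6|j_C(u,v)|\le 12$ by Proposition~\ref{prop:IF}. (A sharper bound is available, since the two correction terms cannot both be large when $(1-u-v)^+$ is small, but the crude bound already matches the stated constant, so I will not pursue it.)

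\textbf{The robustness consequence.} Here I would state the ``$12/n$'' claim as the usual first-order heuristic rather than an exact finite-sample inequality. The asymptotic linearity in the proof of Theorem~\ref{th:variance} gives
\[
\widehat{\Phi}_n-\Phi_C=\frac{6}{n}\sum_{i=1}^n j_C(U_i,V_i)+o_P(n^{-1/2}).
\]
Replacing one observation changes the leading sum by at most $\frac{6}{n}\cdot\sup|j_C|$ times a constant, so the contribution of a single point is $O(1/n)$ and controlled by $\sup|\mathrm{IF}|\le 12$, i.e.\ at most $12/n$ at first order. This also matches the linear-functional computation in Proposition~\ref{prop:IF}: a contamination of mass $\varepsilon=1/n$ shifts $\Phi$ by $\varepsilon\,\mathrm{IF}$.

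\textbf{Main obstacle.} The delicate step is making ``a single outlier changes $\widehat{\Phi}_n$ by at most $12/n$'' rigorous for the actual rank statistic, since changing one observation shifts up to $n$ ranks by one. As a cross-check I would use the rank representation
\[
\widehat{\Phi}_n=\frac{6}{n(n+1)}\sum_i (n+1-R_i-S_i)^+-1.
\]
Moving one point changes its own summand by at most $n-1$, and each other summand by at most $2$ (each of $R_i,S_i$ shifts by at most one). The total change is therefore at most $\frac{6}{n(n+1)}\bigl((n-1)+2(n-1)\bigr)\le \frac{18}{n}$. This gives a rigorous $O(1/n)$ bound with a slightly worse constant, while the constant $12$ is the influence-function (first-order) version. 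I would present the $12/n$ statement explicitly as the first-order interpretation and mention the exact deterministic bound as supporting evidence.
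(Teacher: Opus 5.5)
Your bounding of the three terms of $j_C$ is exactly the paper's argument. The gap is in the second sentence of the theorem, which asserts an exact finite-sample bound. Your proposal ends with a rigorous $18/n$ and only a ``first-order'' $12/n$, so the $12/n$ claim is not proved. You are right that the paper's appeal to a ``one-step representation'' is only heuristic. However, that heuristic cannot give an exact bound: the remainder in the linear expansion is only $o_P(n^{-1/2})$, which cannot control a deterministic change of order $1/n$. Moreover, replacing (not adding) a point moves $\frac{6}{n}\sum_i j_C(U_i,V_i)$ by $\frac{6}{n}|j_C(u,v)-j_C(U_j,V_j)|$, and the range $j_C\in[-2,1]$ bounds this only by $18/n$.

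What your rank count misses is that the moved point's own summand and the shifts of the other summands partially cancel. Since $R_i,S_i\ge 1$, we have $(n+1-R_i-S_i)^+=\#\{a\in\{1,\dots,n-1\}: R_i\le a\le n-S_i\}$, hence
\[
\sum_{i=1}^n (n+1-R_i-S_i)^+=\sum_{a=1}^{n-1} N(a,n-a),\qquad N(a,b)=\#\{i: R_i\le a,\ S_i\le b\}.
\]
Let $g$ be the counting function of the $n-1$ untouched points, ranked among themselves. Let $\alpha,\beta\in\{0,1\}$ indicate whether the moved point is among the $a$ smallest $X$'s, resp.\ the $b$ smallest $Y$'s. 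Then $N(a,b)=g(a-\alpha,b-\beta)+\alpha\beta$, and only $(\alpha,\beta)$ changes under the replacement. Since $g$ is monotone with steps in $\{0,1\}$, the four values $g(a,b)$, $g(a-1,b)$, $g(a,b-1)$, $g(a-1,b-1)+1$ lie in an interval of length one. Indeed, $g(a-1,b-1)=g(a,b)$ forces $g(a-1,b)=g(a,b-1)=g(a,b)$, and $g(a-1,b-1)=g(a,b)-2$ forces both to equal $g(a,b)-1$. So each $N(a,n-a)$ moves by at most one, and
\[
|\Delta\widehat{\Phi}_n|\le \frac{6(n-1)}{n(n+1)}<\frac{6}{n}.
\]
Equality holds for a countermonotonic sample in which the point of ranks $(n,1)$ is moved to ranks $(1,1)$. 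This proves the $12/n$ claim rigorously, with a better constant.

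A second point you inherit from Proposition~\ref{prop:IF}, exactly as the paper does: $\mathbb{E}_C[j_C(U,V)]\neq 0$ in general. For $C=\Pi$, $j_\Pi(u,v)=(1-u-v)^+-\frac{(1-u)^2}{2}-\frac{(1-v)^2}{2}$ has mean $\frac16-\frac16-\frac16=-\frac16$. So the properly centred influence function is $6\,(j_C-\mathbb{E}_C j_C)$, and the identity $|\mathrm{IF}|=6|j_C|$ fails (at $(0,0)$ it gives $0$ instead of $1$). The bound $12$ still holds, but the range $[-2,1]$ would only give $18$. You need the sharper range you set aside, namely $j_C\ge -1$. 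For $u+v<1$ this follows from your crude bounds: $j_C\ge (1-u-v)-(1-u)-(1-v)=-1$. For $u+v\ge 1$, the two correction terms equal $\mathbb{P}(U\ge u,\,U+V\le 1)$ and $\mathbb{P}(V\ge v,\,U+V\le 1)$. Their events are a.s.\ disjoint, so the terms sum to at most $1$. Hence $j_C\in[-1,1]$ and $|j_C-\mathbb{E}_C j_C|\le 2$.
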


\begin{proof}
By Proposition~\ref{prop:IF}, 
$\mathrm{IF}((u,v);\Phi,C) = 6\,j_C(u,v)$, where
\(j_C\) is given by \eqref{eq:JC}. We bound each term separately. First, $(1-u-v)^+\in[0,1]$ for 
all $(u,v)\in[0,1]^2$. Second, since $\dot{C}_1(s,t) = 
\partial C(s,t)/\partial s$ is a conditional distribution 
function, it satisfies $\dot{C}_1(s,t)\in[0,1]$ for all 
$(s,t)\in[0,1]^2$, and hence
\[
0 \leq \int_u^1 \dot{C}_1(s,1-s)\,\mathrm{d}s \leq 
\int_u^1 1\,\mathrm{d}s = 1-u \leq 1.
\]
Similarly, $\dot{C}_2(s,t)\in[0,1]$ implies
\[
0 \leq \int_0^{1-v} \dot{C}_2(s,1-s)\,\mathrm{d}s \leq 
\int_0^{1-v} 1\,\mathrm{d}s = 1-v \leq 1.
\]
Therefore $j_C(u,v)\in[-2,1]$ for all $(u,v)\in[0,1]^2$ and 
all copulas $C$, which gives $|j_C(u,v)|\leq \max\{2,1\}=2$, and hence
\[
|\mathrm{IF}((u,v);\Phi,C)| \leq 12.
\]
The finite-sample bound follows from the one-step representation of $\widehat{\Phi}_n$: replacing one observation $(\hat{U}_j,\hat{V}_j)$ by an arbitrary point $(u,v)\in[0,1]^2$ changes $\widehat{\Phi}_n$ by at most
\[
\frac{1}{n}\sup_{(u,v)\in[0,1]^2}
|\mathrm{IF}((u,v);\Phi,C)| \leq \frac{12}{n}.
\]
\end{proof}

\section{Simulation study}\label{sec:simulation}

We investigate the finite-sample behaviour of $\widehat{\Phi}_n$ and compare it with Spearman's footrule estimator $\widehat{\varphi}_n$ by means of a Monte Carlo experiment. The study is designed to assess performance under both positive and
negative dependence structures, given that $\widehat{\Phi}_n$ is specifically constructed to measure deviation from perfect negative dependence.

Samples of size $n \in \{100, 200, 500\}$ are generated from the following copula families:
\begin{itemize}
  \item Clayton ($\theta = 2, 5$): positive dependence with strong lower-tail concentration; Kendall's $\tau = \theta/(\theta+2)$, giving $\tau = 0.500$ and $\tau = 0.714$, respectively.
  \item Gaussian ($\rho = -0.9, -0.7, -0.3, 1$): covers weak, moderate and strong negative dependence, as well as perfect positive dependence. The case $\rho = 1$ (copula $M$) is included as a boundary scenario.
  \item Gumbel ($\theta = 3, 5$): positive dependence with strong upper-tail concentration; Kendall's $\tau = 1 - 1/\theta$, giving $\tau = 0.667$ and $\tau = 0.800$, respectively.
  \item Frank ($\theta = -5, -10$): negative dependence with symmetric tail behaviour; included to specifically assess the sensitivity of $\widehat{\Phi}_n$ to countermonotonic structures not captured by Clayton or Gumbel.
\end{itemize}
For each scenario, $B = 10^4$ independent replications are performed. The true
values $\Phi_C$ are computed as follows: analytically for the Gaussian copula
via the closed-form expression given by \eqref{eq:gauss_closed}, and by high-precision numerical integration ($2\times 10^6$ quadrature points)
for the Clayton, Gumbel, and Frank families. True values of $\varphi_C$ are
approximated analogously. The empirical mean, bias, and standard deviation of
each estimator are recorded in Table~\ref{tab:simulation},
\begin{table}[ht!]
\centering
\small
\renewcommand{\arraystretch}{1.15}
\begin{tabular}{llr  rr  rrr  rrr}
\hline
& & & \multicolumn{2}{c}{True value}
  & \multicolumn{3}{c}{$\widehat{\Phi}_n$ }
  & \multicolumn{3}{c}{$\widehat{\varphi}_n$ } \\
\cmidrule(lr){4-5}\cmidrule(lr){6-8}\cmidrule(lr){9-11}
Copula & Param. & $n$
  & $\Phi_C$ & $\varphi_C$
  & Mean & Bias & SD
  & Mean & Bias & SD \\
\hline
\multirow{3}{*}{Clayton}
  & \multirow{3}{*}{$\theta=5$}
  & 100 & \multirow{3}{*}{0.46021} & \multirow{3}{*}{0.70433}
        & 0.44363 & $-$0.01658 & 0.01822
        & 0.70091 & $-$0.00342 & 0.03715 \\
  & & 200 & & & 0.45170 & $-$0.00851 & 0.01277
                & 0.70211 & $-$0.00222 & 0.02621 \\
  & & 500 & & & 0.45671 & $-$0.00350 & 0.00798
                & 0.70329 & $-$0.00104 & 0.01651 \\
\cmidrule(lr){1-11}
\multirow{3}{*}{Clayton}
  & \multirow{3}{*}{$\theta=2$}
  & 100 & \multirow{3}{*}{0.36175} & \multirow{3}{*}{0.48528}
        & 0.34511 & $-$0.01664 & 0.03905
        & 0.48515 & $-$0.00013 & 0.05468 \\
  & & 200 & & & 0.35366 & $-$0.00809 & 0.02747
                & 0.48553 & $+$0.00025 & 0.03851 \\
  & & 500 & & & 0.35866 & $-$0.00309 & 0.01712
                & 0.48602 & $+$0.00074 & 0.02427 \\
\cmidrule(lr){1-11}
\multirow{3}{*}{Gaussian}
  & \multirow{3}{*}{$\rho=-0.9$}
  & 100 & \multirow{3}{*}{$-$0.69675} & \multirow{3}{*}{$-$0.45223}
        & $-$0.69429 & $+$0.00246 & 0.03330
        & $-$0.43601 & $+$0.01622 & 0.01874 \\
  & & 200 & & & $-$0.69507 & $+$0.00168 & 0.02322
                & $-$0.44376 & $+$0.00847 & 0.01286 \\
  & & 500 & & & $-$0.69598 & $+$0.00077 & 0.01469
                & $-$0.44884 & $+$0.00339 & 0.00821 \\
\cmidrule(lr){1-11}
\multirow{3}{*}{Gaussian}
  & \multirow{3}{*}{$\rho=-0.7$}
  & 100 & \multirow{3}{*}{$-$0.47019} & \multirow{3}{*}{$-$0.35622}
        & $-$0.47232 & $-$0.00213 & 0.05150
        & $-$0.34115 & $+$0.01507 & 0.03676 \\
  & & 200 & & & $-$0.47104 & $-$0.00085 & 0.03629
                & $-$0.34854 & $+$0.00768 & 0.02594 \\
  & & 500 & & & $-$0.47013 & $+$0.00006 & 0.02270
                & $-$0.35294 & $+$0.00328 & 0.01633 \\
\cmidrule(lr){1-11}
\multirow{3}{*}{Gaussian}
  & \multirow{3}{*}{$\rho=-0.3$}
  & 100 & \multirow{3}{*}{$-$0.17569} & \multirow{3}{*}{$-$0.15854}
        & $-$0.18226 & $-$0.00657 & 0.06361
        & $-$0.14537 & $+$0.01317 & 0.05665 \\
  & & 200 & & & $-$0.17809 & $-$0.00240 & 0.04502
                & $-$0.15140 & $+$0.00714 & 0.04001 \\
  & & 500 & & & $-$0.17697 & $-$0.00128 & 0.02864
                & $-$0.15603 & $+$0.00251 & 0.02546 \\
\cmidrule(lr){1-11}
\multirow{3}{*}{Gaussian}
  & \multirow{3}{*}{$\rho=1$}
  & 100 & \multirow{3}{*}{0.50000} & \multirow{3}{*}{1.00000}
        & 0.48515 & $-$0.01485 & 0.00000
        & 1.00000 & $\phantom{+}$0.00000 & 0.00000 \\
  & & 200 & & & 0.49254 & $-$0.00746 & 0.00000
                & 1.00000 & $\phantom{+}$0.00000 & 0.00000 \\
  & & 500 & & & 0.49701 & $-$0.00299 & 0.00000
                & 1.00000 & $\phantom{+}$0.00000 & 0.00000 \\
\cmidrule(lr){1-11}
\multirow{3}{*}{Gumbel}
  & \multirow{3}{*}{$\theta=5$}
  & 100 & \multirow{3}{*}{0.47573} & \multirow{3}{*}{0.79239}
        & 0.45967 & $-$0.01606 & 0.01243
        & 0.78737 & $-$0.00502 & 0.02623 \\
  & & 200 & & & 0.46752 & $-$0.00821 & 0.00860
                & 0.78953 & $-$0.00286 & 0.01811 \\
  & & 500 & & & 0.47246 & $-$0.00327 & 0.00534
                & 0.79127 & $-$0.00112 & 0.01131 \\
\cmidrule(lr){1-11}
\multirow{3}{*}{Gumbel}
  & \multirow{3}{*}{$\theta=3$}
  & 100 & \multirow{3}{*}{0.43301} & \multirow{3}{*}{0.65496}
        & 0.41662 & $-$0.01639 & 0.02418
        & 0.65197 & $-$0.00299 & 0.04041 \\
  & & 200 & & & 0.42493 & $-$0.00808 & 0.01719
                & 0.65353 & $-$0.00143 & 0.02812 \\
  & & 500 & & & 0.42956 & $-$0.00345 & 0.01060
                & 0.65389 & $-$0.00107 & 0.01774 \\
\cmidrule(lr){1-11}
\multirow{3}{*}{Frank}
  & \multirow{3}{*}{$\theta=-5$}
  & 100 & \multirow{3}{*}{$-$0.44055} & \multirow{3}{*}{$-$0.35321}
        & $-$0.44120 & $-$0.00065 & 0.04999
        & $-$0.33734 & $+$0.01587 & 0.03843 \\
  & & 200 & & & $-$0.44078 & $-$0.00023 & 0.03602
                & $-$0.34448 & $+$0.00873 & 0.02740 \\
  & & 500 & & & $-$0.44078 & $-$0.00023 & 0.02250
                & $-$0.35001 & $+$0.00320 & 0.01722 \\
\cmidrule(lr){1-11}
\multirow{3}{*}{Frank}
  & \multirow{3}{*}{$\theta=-10$}
  & 100 & \multirow{3}{*}{$-$0.65396} & \multirow{3}{*}{$-$0.45189}
        & $-$0.65107 & $+$0.00289 & 0.03314
        & $-$0.43540 & $+$0.01649 & 0.01944 \\
  & & 200 & & & $-$0.65223 & $+$0.00173 & 0.02299
                & $-$0.44350 & $+$0.00839 & 0.01356 \\
  & & 500 & & & $-$0.65336 & $+$0.00060 & 0.01426
                & $-$0.44838 & $+$0.00351 & 0.00836 \\
\hline
\end{tabular}\caption{Empirical mean, bias, and standard deviation over $10^4$ Monte Carlo
replications of $\widehat{\Phi}_n$ and $\widehat{\varphi}_n$ for selected copula families, parameter values, and sample sizes $n$. True values $\Phi_C$ for the Gaussian copula are exact  via~\eqref{eq:gauss_closed}; remaining true values are obtained by numerical integration.}\label{tab:simulation}
\end{table}
where several conclusions emerge.
\begin{enumerate}
\item First, both estimators exhibit a small bias that decreases consistently as $n$ increases, confirming the strong consistency established in Theorem~\ref{th:consis}. The bias of $\widehat{\Phi}_n$ is negative under positive dependence (Clayton, Gumbel, Gaussian $\rho=1$) and negligible or slightly positive under negative
dependence (Gaussian $\rho \in \{-0.9,-0.7,-0.3\}$, Frank), reflecting the
asymmetric normalisation $\Phi_W = -1$, $\Phi_M = 1/2$. From a theoretical standpoint, smooth functionals of the empirical copula typically exhibit an $O(n^{-1})$ bias; however, the discretisation induced by the $n/(n+1)$ scaling
of the pseudo-observations introduces an additional contribution, and the observed decay in the table is compatible with an $O(n^{-1})$ rate. For instance, for the Clayton copula with $\theta=5$, the bias of $\widehat{\Phi}_n$
goes from $-0.01658$ at $n=100$ to $-0.00851$ at $n=200$ (ratio $\approx 0.51\approx 1/2$) and $-0.00350$ at $n=500$ (ratio $\approx 0.21 \approx 1/5$), consistent with an $O(n^{-1})$ decay.

\item Second, the standard deviation of $\widehat{\Phi}_n$ decreases at a rate compatible with $O(n^{-1/2})$ across all scenarios, as predicted by Theorem~\ref{th:variance}. For example, for the Frank copula with $\theta=-5$, the standard deviation goes from $0.04999$ at $n=100$ to $0.03602$ at $n=200$ (ratio $\approx 0.72 \approx 1/\sqrt{2}$) and $0.02250$
at $n=500$ (ratio $\approx 0.45 \approx 1/\sqrt{5}$).

\item Third, and most notably, under negative dependence the bias of $\widehat{\Phi}_n$ is markedly smaller in absolute value than that of $\widehat{\varphi}_n$, and its standard deviation is comparable or smaller. For the Gaussian copula with $\rho=-0.9$, the bias of $\hat{\Phi}_n$ at $n=100$ is $+0.00246$, while that of $\widehat{\varphi}_n$ is $+0.01622$ --- more than six times larger. This reflects the greater precision of $\widehat{\Phi}_n$ in the region of the parameter space where it is specifically designed to operate, namely close to the
countermonotonic bound $W$. Similarly, for the Frank copula with $\theta=-10$ at $n=100$, the bias of $\widehat{\Phi}_n$ is $+0.00289$ against $+0.01649$ for $\widehat{\varphi}_n$.

\item Fourth, the boundary case $\rho=1$ (copula $M$) deserves special comment. Since $M$ does not satisfy the smoothness assumption of Theorem~\ref{th:variance} (its partial derivatives are not continuous on $(0,1)^2$), the asymptotic
normality result does not apply, and the simulation results for that scenario are purely illustrative. The standard deviation of $\widehat{\Phi}_n$ is exactly zero in this case because, with $V=U$ almost surely, the pseudo-observations satisfy $\hat{U}_i + \hat{V}_i = 2R_i/(n+1) > 1$ for all $i$ such that
$R_i > (n+1)/2$, and $(1-\hat{U}_i-\hat{V}_i)^+ = 0$ otherwise, yielding a deterministic estimator for each fixed $n$. A negligible boundary bias
persists due to the $n/(n+1)$ rescaling and vanishes as $n\to\infty$.
\end{enumerate}

Together, $\widehat{\Phi}_n$ and $\widehat{\varphi}_n$ provide a complementary
description of the dependence structure, consistent with the Gini decomposition
$\gamma_C = \frac{2}{3}(\varphi_C + \Phi_C)$ established in Section~\ref{sec:foot}. The simulation confirms that $\widehat{\Phi}_n$ is the more informative of the two
when negative dependence is present, while $\widehat{\varphi}_n$ is better suited
to detect positive dependence structures.

\section{Conclusions}\label{sec:conc}

We have introduced the $W$-footrule coefficient $\Phi_C$ as an $L^1$-distance to the countermonotonic copula $W$, establishing it as a natural companion to Spearman's footrule $\varphi_C$. The decomposition $\gamma_C=\frac{2}{3}(\varphi_C+\Phi_C)$ provides a transparent interpretation of Gini's $\gamma$ as a balanced combination of two directional footrules, one measuring proximity to perfect positive dependence and the other to perfect negative dependence. The estimator $\widehat{\Phi}_n$ is rank-based, computationally simple, strongly consistent, asymptotically normal, and possesses a bounded
influence function, ensuring robustness in the sense of Hampel. Monte Carlo experiments confirm its good finite-sample behaviour and indicate improved precision under strong negative dependence relative to Spearman’s footrule estimator. Future work may explore extensions to higher dimensions, analogous decompositions for other concordance measures, development of formal tests for countermonotonicity based on
$\widehat{\Phi}_n$, and investigation of semiparametric efficiency properties of the estimator.

%
%

\end{document}